\newtheorem{Th}{Theorem}[section]
\newtheorem{Lem}[Th]{Lemma}
\newtheorem{Rem}[Th]{Remark}
\newcommand{\R}{\mathbb{R}}
\newcommand{\cC}{{\mathcal C}}
\newcommand{\cJ}{{\mathcal J}}
\newcommand{\Ga}{\Gamma}
\newcommand{\weakto}{\rightharpoonup}
\numberwithin{equation}{section}
\newcommand{\loc}{\mathrm{loc}}
\begin{document}


\title{Note on semiclassical states for the Schr\"odinger equation with nonautonomous nonlinearities}



\author{Bartosz Bieganowski}
\ead{bartoszb@mat.umk.pl}
\address{Nicolaus Copernicus University, Faculty of Mathematics and Computer Science, ul. Chopina 12/18, 87-100 Toru\'n, Poland}

\author{Jaros\l aw Mederski}
\ead{jmederski@impan.pl}
\address{Institute of Mathematics, Polish Academy of Sciences, ul. \'Sniadeckich 8, 00-656 Warsaw, Poland,\\
CRC 1173 Wavephenomena: Analysis and Numerics,
Departement of Mathematics,
Karlsruhe Institute of Technology (KIT), 
D-76128 Karlsruhe, Germany}

\begin{abstract} 

We consider the following Schr\"odinger equation
$$
- \hslash ^2 \Delta u + V(x)u = \Gamma(x) f(u) \quad \mathrm{in} \ \R^N,
$$
where $u \in H^1 (\R^N)$, $u > 0$, $\hslash > 0$ and $f$ is superlinear and subcritical nonlinear term. We show that if $V$ attains local minimum and $\Gamma$ attains global maximum at the same point or  $V$ attains global minimum and $\Gamma$ attains local maximum at the same point, then there exists a positive solution for sufficiently small $\hslash>0$.

   
\end{abstract}

\begin{keyword}
semiclassical limit \sep variational methods \sep bounded potential \sep nonautonomous nonlinearity
\MSC[2010] 35Q55 \sep 35A15 \sep 35J20 
\end{keyword}

\maketitle

\section{Introduction}
\setcounter{section}{1}

We consider the following semilinear elliptic problem
\begin{equation}\label{eq:1.1}
- \hslash ^2 \Delta u + V(x)u = \Gamma(x) f(u) \quad \mathrm{in} \ \R^N, \ N \geq 1,
\end{equation}
where $u \in H^1 (\R^N)$ and $u > 0$. Equation \eqref{eq:1.1} describes the so-called \textit{standing waves} of the nonlinear, time-dependent Schr\"odinger equation of the form
$$
\mathrm{i} \hslash \frac{\partial \Psi}{\partial t} = - \frac{\hslash^2}{2m} \Delta \Psi + V(x) \Psi - h(x, \Psi).
$$
Solutions of \eqref{eq:1.1} for sufficiently small $\hslash > 0$ are called \textit{semiclassical states}. Recently many papers have been devoted to study semiclassical states, see eg. \cite{delPinoCV, ByeonJeanjean, ByeonTanaka, delPinoJFA, dAvenia, Rabinowitz, Wang} and references therein and most of the concentrate on the case $\Gamma=1$. Our aim is to show that the method introduced by del Pino and Felmer in \cite{delPinoCV} can be also applied to a general class of problems with nonautonomous nonlinearities. 
Moreover, contrary to \cite{delPinoCV}, we do not need the H\"older continuity of the potential $V$, since we do not use the regularity of solutions to show e.g. Lemma \ref{lem:1.5} below. 

We impose the following condition on the potential $V$.
\begin{enumerate}
\item[(V)] $V \in L^\infty (\R^N)$ is continuous and there is a constant $\alpha > 0$ such that $ V(x) \geq \alpha$ for all $x \in \R^N$.
\end{enumerate}

We assume that $f : \R_+ \rightarrow \R$ is of $\cC^1$-class and satisfies the following conditions.

\begin{enumerate}
\item[(F1)] $f(u)=o(u)$ as $u \to 0^+$.
\item[(F2)] $\lim_{u\to\infty} \frac{f(u)}{u^{p-1}} = 0$ for some $2 < p < 2^*$, where $2^* = \frac{2N}{N-2}$ for $N \geq 3$ and $2^* = +\infty$ otherwise.
\item[(F3)] There is $2 < \theta \leq p$ such that $0 < \theta F(u)\leq f(u)u$ for $u > 0$, where $F(u) := \int_0^u f(s) \, ds$.
\item[(F4)] The function $u \mapsto \frac{f(u)}{u}$ is nondecreasing.
\end{enumerate}

Now $\Gamma$ satisfies the  following condition.
\begin{enumerate}
\item[($\Ga$)] $\Ga \in L^\infty (\R^N)$ is continuous and there is a constant $\beta > 0$ such that $\Ga(x) \geq \beta > 0$ for all $x \in \R^N$.
\end{enumerate}

We introduce the following relation between  $V$ and $\Gamma$.

\begin{enumerate}
\item[($\Lambda$)] Assume that there is a bounded, nonempty domain (i.e. open and connected set) $\Lambda \subset \R^N$ such that
\begin{align*}
(\Lambda_1) \ \Ga \mbox{ is } \mathbb{Z}^N\mbox{-periodic and there is } x_{\min} \in \Lambda \mbox{ such that } V(x_{\min}) = \inf_\Lambda V < \min_{\partial \Lambda} V \ \mathrm{and} \ \Ga(x_{\min}) = \sup_{\R^N} \Gamma
\end{align*}
or
\begin{align*}
(\Lambda_2) \ V \mbox{ is } \mathbb{Z}^N\mbox{-periodic and there is } x_{\max} \in \Lambda \mbox{ such that } \Ga(x_{\max}) = \sup_\Lambda \Ga > \max_{\partial \Lambda} \Ga \ \mathrm{and} \ V(x_{\max}) = \inf_{\R^N} V.
\end{align*}
\end{enumerate}
\begin{Rem}
	Without loss of generality we may assume that $|\Ga|_\infty = 1$, where $|\cdot|_k$ denotes the usual $L^k$-norm with $k\geq 1$ or $k=\infty$. Indeed, see that
	$$
	\Gamma(x) f(u) = \frac{\Gamma(x)}{|\Ga|_\infty} |\Ga|_\infty f(u).
	$$
	Taking $\hat{\Gamma} (x) := \frac{\Gamma(x)}{|\Ga|_\infty}$ and $\hat{f} (u) :=  |\Ga|_\infty f(u)$ we see that all conditions are still satisfied and $|\hat{\Ga}|_\infty = 1$. Hence in the rest of the paper we take $|\Ga|_\infty = 1$.
\end{Rem}

Our main result reads as follows.

\begin{Th}\label{th:main1}
Suppose that (V), (F1)--(F4), ($\Ga$) and ($\Lambda$) are satisfied. Then there is $\hslash_0 > 0$ such that for any $\hslash \in (0, \hslash_0)$ the problem \eqref{eq:1.1} has a positive solution $u \in H^1 (\R^N) \cap \cC(\R^N)$ and there are constants $C, \alpha > 0$ such that $u(x) \leq C \exp(-\alpha |x|)$.
\end{Th}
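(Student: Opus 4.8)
The plan is to adapt the penalization method of del Pino and Felmer~\cite{delPinoCV} to the nonautonomous right-hand side. Put $\eps:=\hslash$; substituting $v(x):=u(\eps x)$ turns \eqref{eq:1.1} into the equivalent equation
\[
-\Delta v + V(\eps x)\,v = \Ga(\eps x)\,f(v)\qquad\text{in }\R^N,
\]
so it suffices to produce, for each small $\eps>0$, a positive $v_\eps\in H^1(\R^N)$ solving this with an exponential decay bound, and then to set $u(x):=v_\eps(x/\eps)$. Write $x_0$ for the common point of $(\Lambda)$ (so $x_0=x_{\min}$ in case $(\Lambda_1)$ and $x_0=x_{\max}$ in case $(\Lambda_2)$) and put $V_0:=V(x_0)$, $\Ga_0:=\Ga(x_0)$. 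The autonomous limit equation $-\Delta w+V_0 w=\Ga_0 f(w)$ has a positive radial ground state with mountain-pass energy $c_0>0$, and by a standard comparison argument based on (F3)--(F4) the analogous energy $c(V(y),\Ga(y))$ is nondecreasing in the first and nonincreasing in the second variable; hence, using the extremality properties collected in $(\Lambda_1)$, resp. $(\Lambda_2)$, one has $c(V(y),\Ga(y))=c_0$ at $y=x_0$, $c(V(y),\Ga(y))\ge c_0$ for $y\in\ov\Lambda$, and $c(V(y),\Ga(y))>c_0$ for $y\in\partial\Lambda$.

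I would then penalize the nonlinearity outside $\Lambda$. Fix a large constant $k$ and let $a>0$ be the unique point with $f(a)/a=\alpha/k$ (which exists by (F1) and (F4)); extend $f$ by $0$ on $(-\infty,0]$, set $\wt f(s):=\min\{f(s),\tfrac{\alpha}{k}s\}$, and define
\[
g(x,s):=\chi_{\Lambda}(x)\,\Ga(x)f(s)+\bigl(1-\chi_{\Lambda}(x)\bigr)\,\Ga(x)\wt f(s),\qquad g_\eps(x,s):=g(\eps x,s).
\]
By construction $g_\eps$ is continuous, subcritical, satisfies (F3) on $\Lambda_\eps:=\{x\in\R^N:\eps x\in\Lambda\}$, obeys $g_\eps(x,s)\le\tfrac{\alpha}{k}s$ off $\Lambda_\eps$, and coincides with $\Ga(\eps x)f(s)$ whenever $s\le a$. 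With $G_\eps(x,s):=\int_0^s g_\eps(x,t)\,dt$, the functional
\[
J_\eps(v):=\frac12\int_{\R^N}\!\bigl(|\nabla v|^2+V(\eps x)v^2\bigr)\,dx-\int_{\R^N}G_\eps(x,v)\,dx
\]
on $H^1(\R^N)$ has the mountain-pass geometry, and it satisfies the Palais--Smale (or Cerami) condition: boundedness of the sequences comes from (F3) on the bounded set $\Lambda_\eps$ together with the subquadratic bound outside, while precompactness follows because any mass escaping to infinity would solve, in the limit, $-\Delta w+V_\infty w\le\tfrac{\alpha}{k}w$ with $V_\infty\ge\alpha>\alpha/k$, forcing it to vanish (on $\Lambda_\eps$ one uses the compact Sobolev embedding). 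Hence $J_\eps$ has a mountain-pass critical point $v_\eps$ at a level $c_\eps>0$; testing with $v_\eps^-$ gives $v_\eps\ge0$ since $g_\eps(x,s)=0$ for $s\le0$, and the strong maximum principle then yields $v_\eps>0$.

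The crucial step is to show that $v_\eps\le a$ on $\R^N\setminus\Lambda_\eps$ once $\eps$ is small. First, evaluating $J_\eps$ along $t\mapsto t\,w(\cdot-x_0/\eps)\,\eta(\eps\cdot)$, with $w$ the limit ground state and $\eta$ a cut-off supported in $\Lambda$ and equal to $1$ near $x_0$, and passing to the limit gives $\limsup_{\eps\to0}c_\eps\le c_0$. Second --- and this is the step I expect to be the main obstacle --- one performs a concentration analysis of $v_\eps$: up to a translation it has a single concentrating bump with centre $y_\eps$, and (a) a bump cannot persist in the region where the nonlinearity is penalized, since there the operator $-\Delta+V(\eps x)-\alpha/k$ is uniformly coercive, while (b) a bump concentrating at $y\in\ov\Lambda$ forces $c_\eps\ge c(V(y),\Ga(y))+o(1)$, which by the previous paragraph is $>c_0$ unless $y=x_0$, contradicting the upper bound. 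This is precisely where the strict inequalities on $\partial\Lambda$, the global extremality at $x_0$, and the $\mathbb{Z}^N$-periodicity in $(\Lambda_1)$, resp. $(\Lambda_2)$, are used, and --- as noted in the Introduction --- it can be carried out without any regularity of $V$ (this is Lemma~\ref{lem:1.5}). It follows that $\sup_{\R^N\setminus\Lambda_\eps}v_\eps\to0$ as $\eps\to0$, so $v_\eps\le a$ off $\Lambda_\eps$ for all small $\eps$.

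To conclude, since $v_\eps\le a$ off $\Lambda_\eps$ and $g_\eps(x,\cdot)$ agrees with $\Ga(\eps x)f(\cdot)$ both on $\Lambda_\eps$ and on $[0,a]$, the function $v_\eps$ solves the unpenalized rescaled equation, so $u(x):=v_\eps(x/\eps)$ is a positive solution of \eqref{eq:1.1} lying in $H^1(\R^N)$. Since $v_\eps(x)\to0$ as $|x|\to\infty$ and $f(s)/s\to0$ as $s\to0^+$, for $|x|$ large we have $\Ga(\eps x)f(v_\eps)\le\tfrac{\alpha}{2}v_\eps$, hence $-\Delta v_\eps+\tfrac{\alpha}{2}v_\eps\le0$; comparing with $Ce^{-c|x|}$ for a fixed $c<\sqrt{\alpha/2}$ gives $v_\eps(x)\le Ce^{-c|x|}$ and therefore $u(x)\le Ce^{-(c/\eps)|x|}$. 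Finally $u\in\cC(\R^N)$ follows by bootstrapping the $W^{2,q}_{\loc}$-estimates for $-\hslash^2\Delta u=\Ga f(u)-Vu$, which need only $V,\Ga\in L^\infty$ and no Hölder continuity: the integrability of the right-hand side improves at each step until $u\in\cC^{1,\gamma}_{\loc}(\R^N)$.
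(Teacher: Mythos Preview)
Your proposal is correct and follows the same del Pino--Felmer penalization strategy as the paper, but there are two genuine technical differences worth noting. First, you compare against the fully autonomous limit problem $-\Delta w+V_0 w=\Ga_0 f(w)$, while the paper uses a half-periodic limit (e.g.\ $-\Delta w+V_0 w=\Ga(x)f(w)$ in case $(\Lambda_1)$); the paper's choice is what makes the $\Z^N$-periodicity in $(\Lambda)$ enter the argument (it is used only to guarantee existence of a ground state for that limit problem), whereas your autonomous limit always has one and the periodicity is then not needed at that step. Second, and more substantively, to remove the penalization you assert $\sup_{\R^N\setminus\Lambda_\eps}v_\eps\to 0$, which implicitly requires uniform $L^\infty$-decay of $v_\eps$ away from the bump; the paper proceeds more directly: it only shows the boundary quantity $m_\hslash=\max_{\partial\Lambda}u_\hslash\to 0$ via the concentration Lemma~\ref{lem:1.5}, and once $m_\hslash<a$ it tests the penalized equation against $(u_\hslash-a)_+$ to obtain
\[
\int_{\R^N\setminus\Lambda}\hslash^2|\nabla(u_\hslash-a)_+|^2+c(x)\bigl[(u_\hslash-a)_+^2+a(u_\hslash-a)_+\bigr]\,dx=0,
\qquad c(x)=V(x)-\frac{g(x,u_\hslash)}{u_\hslash}>0,
\]
which forces $(u_\hslash-a)_+\equiv 0$ on $\R^N\setminus\Lambda$ without any further decay analysis. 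Both routes lead to the same conclusion; the paper's test-function argument is shorter, while your uniform-decay argument yields slightly more information about the solution but needs an extra elliptic-regularity step.
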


Define the energy functional $\cJ_\hslash : H \rightarrow \R$
$$
\cJ_\hslash (u) := \frac{1}{2}\int_{\R^N} \hslash^2 | \nabla u|^2 + V(x) u^2 \, dx - \int_{\R^N} \Ga(x) F(u) \, dx,
$$
where $H = H^1 (\R^N)$. Note that for any fixed $\hslash > 0$ the quadratic form
$$
H \ni u \mapsto Q_\hslash (u) :=\int_{\R^N} \hslash^2 | \nabla u|^2 + V(x) u^2 \, dx \in \R
$$
is positive definite. Hence for any $\hslash > 0$, $u \mapsto \sqrt{Q_\hslash (u)}$ is well-defined norm on $H$ and equivalent to the classic one in $H$. Let $\theta$ be given by (F3) and fix $k > \frac{\theta}{\theta-2} > 1$. In view of (F3) and (F4) there is $a > 0$ such that $\frac{f(a)}{a} = \frac{\alpha}{k}$. We define
$$
\tilde{f} (u) := \left\{ \begin{array}{ll}
f(u) & \ \mathrm{for} \ u \leq a, \\
\frac{\alpha}{k} u & \ \mathrm{for} \ u > a,
\end{array} \right.
$$
where $\alpha$ has beed introduced in (V) and
$$
g(x, u) := \chi_\Lambda (x) \Gamma(x) f(u) + (1-\chi_\Lambda(x))\Ga(x) \tilde{f}(u).
$$
In what follows we will consider $f$ as a function $f : \R \rightarrow \R$ satisfying (F1)-(F4) for on $\R_+$ and defined as $0$ for $u \leq 0$. Then $g : \R^N \times \R \rightarrow \R$ is a Carath\'eodory function. Moreover the following conditions hold.
\begin{enumerate}
\item[(G1)] $g(x,u) = o(u)$ for $|u| \to 0^+$ uniformly in $x \in \R^N$.
\item[(G2)] $\lim_{u\to\infty} \frac{g(x,u)}{u^{p-1}} = 0$ for some $2<p<2^*$ uniformly in $x \in \R^N$.
\item[(G3)] There is $2 < \theta \leq p$ such that
$$
0 < \theta G(x,u) \leq g(x,u)u \quad \mathrm{for} \ x \in \Lambda, \ u > 0
$$
and
$$
0 \leq 2 G(x,u) \leq g(x,u)u \leq \frac{1}{k} V(x) u^2 \quad \mathrm{for} \ x \not\in \Lambda, \ u > 0.
$$
\item[(G4)] The function $u \mapsto \frac{g(x,u)}{u}$ is nondecreasing on $(0,\infty)$ for all $x \in \R^N$. Moreover, if $x \in \R^N \setminus \Lambda$, the function $u \mapsto \frac{g(x,u)}{u}$ is constant on $[a,\infty)$.
\end{enumerate}

Indeed, (G1), (G2) and (G4) are obvious and we need to check (G3). For $x \in \Lambda$ we have $g(x,u) = \Ga (x) f(u)$ and $G(x,u) = \Ga (x) F(u)$, so the statement follows from (F3). Fix $x \not\in \Lambda$. From (G4) we have
$$
G(x,u) = \int_0^u \frac{g(x,s)}{s} s \, ds \leq \frac{g(x,u)}{u} \int_0^u s \, ds =  \frac{1}{2} g(x,u)u.
$$
Hence $0 \leq 2 G(x,u) \leq g(x,u)u$. Moreover
$$
g(x,u)u = \Ga (x) \tilde{f}(u) u = \left\{ \begin{array}{ll}
\Ga(x) \frac{f(u)}{u} u^2 \leq \Ga(x) \frac{\alpha}{k} u^2, & \quad \mbox{for } u  \leq a, \\
\Ga(x) \frac{\alpha}{k}u^2, & \quad \mbox{for } u  > a.
\end{array} \right.
$$
Thus
$$
g(x,u)u \leq \Ga(x) \frac{\alpha}{k}u^2 \leq \frac{1}{k} \Ga (x) V(x) u^2 \leq \frac{1}{k}  V(x) u^2
$$
and the proof of (G3) is completed. Let $\Phi_\hslash : H \rightarrow \R$ be given by
$$
\Phi_\hslash (u) := \frac{\hslash^2}{2} \int_{\R^N} | \nabla u|^2 \, dx + \frac{1}{2} \int_{\R^N} V(x) u^2 \, dx - \int_{\R^N} G(x,u) \, dx.
$$
From \cite[Lemma 2.1]{delPinoCV} we obtain the following.

\begin{Lem}
The functional $\Phi_\hslash$ possesses a positive critical point $u_\hslash \in H$ such that $\Phi_\hslash (u_\hslash) = c_\hslash$, where
$$
c_\hslash := \inf_{u \in H \setminus \{0\}} \sup_{t \geq 0} \Phi_\hslash(tu).
$$
\end{Lem}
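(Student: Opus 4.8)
The plan is to obtain $u_\hslash$ as a mountain pass critical point of the penalized functional $\Phi_\hslash$; conditions (G1)--(G4) reproduce for $\Phi_\hslash$ exactly the variational structure exploited by del Pino--Felmer in \cite{delPinoCV}. First I would record that $\Phi_\hslash\in\cC^1(H,\R)$, which is routine from the subcritical growth (G1)--(G2). Next I would check the mountain pass geometry relative to the equivalent norm $\|\cdot\|_\hslash:=\sqrt{Q_\hslash(\cdot)}$: by (G1)--(G2) one has $|G(x,s)|\le\eps s^2+C_\eps|s|^p$, hence $\Phi_\hslash(u)\ge(\tfrac12-C\eps)\|u\|_\hslash^2-C'\|u\|_\hslash^p\ge\de>0$ on a small sphere $\|u\|_\hslash=\rho$; and, fixing $e\ge0$ with $\supp e\subset\Lambda$ and $e\ne0$, the bound $G(x,s)\gtrsim s^\theta$ for large $s$ and $x\in\Lambda$ (an immediate consequence of the Ambrosetti--Rabinowitz part of (G3)) forces $\Phi_\hslash(te)\to-\infty$ as $t\to\infty$ since $\theta>2$. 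Finally, (G4) (monotonicity of $s\mapsto g(x,s)/s$), together with the observation that on a ray $tu$ with $u|_\Lambda\equiv0$ the second line of (G3) gives $\Phi_\hslash(tu)\ge\tfrac{t^2}{2}(1-\tfrac1k)\|u\|_\hslash^2\to+\infty$, yields the usual fibering picture: for every $u$ with $u|_\Lambda\not\equiv0$ the map $t\mapsto\Phi_\hslash(tu)$ has a unique maximum, so that $c_\hslash$ coincides with the standard mountain pass level (equivalently, with $\inf_{\cN_\hslash}\Phi_\hslash$ over the Nehari manifold), and in particular $c_\hslash\ge\de>0$.

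The heart of the matter is the Palais--Smale condition at the level $c_\hslash$, and this is the step I expect to be the main obstacle, since $\R^N$ carries no compact Sobolev embedding. Given $(u_n)\subset H$ with $\Phi_\hslash(u_n)\to c_\hslash$ and $\Phi_\hslash'(u_n)\to0$, boundedness follows from $\Phi_\hslash(u_n)-\tfrac1\theta\Phi_\hslash'(u_n)[u_n]\ge(\tfrac12-\tfrac1\theta-\tfrac1{2k})\|u_n\|_\hslash^2$, estimating the inner term by $\theta G\le gu$ on $\Lambda$ and $2G\le gu\le\tfrac1k Vu^2$ off $\Lambda$ from (G3); the coefficient is positive precisely because $k>\tfrac{\theta}{\theta-2}$. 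To recover compactness I would establish tightness: with a cut-off $\xi_R$ vanishing on $B_{R/2}$, equal to $1$ outside $B_R$ and with $|\nabla\xi_R|\le 4/R$, choosing $R$ large so that $\supp\xi_R\cap\Lambda=\emptyset$, testing $\Phi_\hslash'(u_n)$ against $u_n\xi_R$ and using $g(x,u_n)u_n\le\tfrac1k V(x)u_n^2$ on $\supp\xi_R$ absorbs the nonlinear term and leaves $\int_{\R^N}\big(\hslash^2|\nabla u_n|^2+(1-\tfrac1k)Vu_n^2\big)\xi_R\le\tfrac{C}{R}\|u_n\|_\hslash^2+o(1)$, whence $\limsup_n\int_{|x|\ge R}(\hslash^2|\nabla u_n|^2+u_n^2)\to0$ as $R\to\infty$. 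Combining this with the local strong convergence $u_n\to u$ in $L^q_{\loc}$ (after extracting $u_n\weakto u$ in $H$) gives $u_n\to u$ in $L^q(\R^N)$ for $2\le q<2^*$, so that $\int_{\R^N}(g(x,u_n)-g(x,u))(u_n-u)\to0$ by (G1)--(G2); testing $\Phi_\hslash'(u_n)-\Phi_\hslash'(u)$ against $u_n-u$ then yields $\|u_n-u\|_\hslash\to0$. Thus $\Phi_\hslash$ satisfies the Palais--Smale condition at level $c_\hslash$, and the mountain pass theorem produces a critical point $u_\hslash$ with $\Phi_\hslash(u_\hslash)=c_\hslash$; note $u_\hslash\ne0$ since $c_\hslash>0$.

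It remains to check positivity. Because $g(x,s)=0$ for $s\le0$, testing $\Phi_\hslash'(u_\hslash)$ against $u_\hslash^-:=\max\{-u_\hslash,0\}$ gives $\|u_\hslash^-\|_\hslash^2=0$, so $u_\hslash\ge0$; it is then a nonnegative, nontrivial weak solution of $-\hslash^2\Delta u_\hslash+V(x)u_\hslash=g(x,u_\hslash)\ge0$. Since the coefficients are merely bounded (this is exactly where \cite{delPinoCV} uses H\"older regularity and we do not), the weak Harnack inequality / strong maximum principle for $H^1$ supersolutions with $L^\infty$ coefficients forces $u_\hslash>0$ a.e.\ (continuity of $u_\hslash$ being addressed separately). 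This gives the claim; alternatively, all of the above is precisely the content of \cite[Lemma 2.1]{delPinoCV} once (G1)--(G4) are verified, which was done before the statement.
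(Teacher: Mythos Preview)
Your proposal is correct and follows exactly the route the paper takes: the paper gives no independent proof but simply invokes \cite[Lemma~2.1]{delPinoCV}, and you have reproduced (with the obvious adaptation to the $\Gamma$-weighted nonlinearity) the mountain-pass geometry, boundedness, cut-off tightness, and positivity steps of that lemma, as you yourself acknowledge in your last sentence.
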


Applying \cite[Theorem 4.2]{Mederski} we see that $u_\hslash$ is continuous and exponentially decays at infinity. Define
\begin{equation}\label{eq:mh}
m_\hslash := \max_{\partial \Lambda} u_\hslash.
\end{equation}

\section{Case ($\Lambda_1$)}

Put $V_0 := \min_\Lambda V$. Let $w \in H$ be a least energy solution to $-\Delta w + V_0 w = \Ga(x) f(w)$, in particular
$$
\underline{c} := I_0 (w) = \inf_{v \in H \setminus \{0\}} \sup_{t \geq 0} I_0 (tv),
$$
where 
$$
I_0 (v) = \frac{1}{2} \int_{\R^N} |\nabla v|^2 + V_0 v^2 \, dx - \int_{\R^N} \Ga(x) F(v) \, dx.
$$
Under $(\Lambda_1)$ we have that $\Ga$ is $\mathbb{Z}^N$-periodic, hence the solution exists (see e.g. \cite{Rabinowitz,Mederski}).
\begin{Lem}
There holds $\Phi_\hslash (u_\hslash) \leq \hslash^N (\underline{c} + o(1))$ as $\hslash \to 0^+$.
\end{Lem}

\begin{proof} 
Let $x_0 \in \Lambda$ be such that $V(x_0) = V_0$. Let $u(x) := w \left( \frac{x-x_0}{\hslash} \right)$. Then $\Phi_\hslash (u_\hslash) \leq \sup_{t > 0} \Phi_\hslash (tu) = \Phi_\hslash (t_0 u)$ for some $t_0 > 0$. See that
\begin{align*}
\Phi_\hslash (t_0 u) &= \frac{t_0^2 \hslash^2}{2} \int_{\R^N} |\nabla u|^2 \, dx + \frac{t_0^2}{2} \int_{\R^N} V(x) u^2 \, dx - \int_{\R^N} G(x, t_0 u) \, dx \\
&= \hslash^N \left( I_0 (t_0 w) + \frac{t_0^2}{2} \int_{\R^N} [ V(x_0 + \hslash x) - V_0 ] w^2 \, dx + \int_{\R^N} \Ga(x)F(t_0 w) - G(x_0 + \hslash x, t_0 w) \, dx \right).
\end{align*}
From the Lebesgue's dominated convergence theorem we have $\int_{\R^N} [ V(x_0 + \hslash x) - V_0 ] w^2 \, dx \to 0$. Note that $G(x, t_0 w) \leq  F(t_0 w)$. Again, from the Lebesgue's dominated convergence theorem the following convergence hold
$$
\int_{\R^N}  G(x_0 + \hslash x, t_0 w) \, dx \to \int_{\R^N} \Ga (x_0) F(t_0 w) \, dx.
$$
Hence $\int_{\R^N} \Ga(x)F( t_0 w) - G(x_0 + \hslash x, t_0 w) \, dx \to \int_{\R^N} \Ga(x)F( t_0 w) - \Ga (x_0)  F(t_0 w) \, dx   \leq 0$ and finally
$$
\Phi_\hslash (t_0 u) \leq \hslash^N \left( I_0 (t_0 w) + o(1) \right) \leq \hslash^N \left( \underline{c} + o(1) \right) .
$$
\end{proof}

\begin{Lem}\label{Lem:1.4}
There is $C > 0$ such that
$$
\int_{\R^N} \hslash^2 |\nabla u_\hslash|^2 + V(x) |u_\hslash|^2 \, dx \leq C \hslash^N.
$$
\end{Lem}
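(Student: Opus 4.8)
\emph{Plan (not the author's proof).} Write $Q_\hslash(u):=\int_{\R^N}\hslash^2|\nabla u|^2+V(x)u^2\,dx$, so that the quantity to be bounded is exactly $Q_\hslash(u_\hslash)$. The plan is the classical Ambrosetti--Rabinowitz argument: combine the fact that $u_\hslash$ is a critical point of $\Phi_\hslash$ with condition (G3) and the mountain pass level estimate of the preceding lemma, being careful on $\R^N\setminus\Lambda$ where (G3) only provides the weaker inequality $2G(x,u)\le g(x,u)u\le\frac1k V(x)u^2$ instead of the $\theta$-version.

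First I would record that, since $\Phi_\hslash'(u_\hslash)=0$, testing against $u_\hslash$ gives $Q_\hslash(u_\hslash)=\int_{\R^N}g(x,u_\hslash)u_\hslash\,dx$, and therefore, with $\theta>2$ from (G3),
$$
\Phi_\hslash(u_\hslash)=\Phi_\hslash(u_\hslash)-\frac1\theta\Phi_\hslash'(u_\hslash)[u_\hslash]=\left(\frac12-\frac1\theta\right)Q_\hslash(u_\hslash)+\int_{\R^N}\left(\frac1\theta g(x,u_\hslash)u_\hslash-G(x,u_\hslash)\right)dx.
$$
Then I would split the last integral over $\Lambda$ and over $\R^N\setminus\Lambda$. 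On $\Lambda$ the first line of (G3) yields $\frac1\theta g(x,u_\hslash)u_\hslash-G(x,u_\hslash)\ge0$, so that part is simply discarded. On $\R^N\setminus\Lambda$ I would use $G(x,u)\le\frac12 g(x,u)u$ together with $\frac1\theta-\frac12<0$ to get $\frac1\theta g(x,u_\hslash)u_\hslash-G(x,u_\hslash)\ge\left(\frac1\theta-\frac12\right)g(x,u_\hslash)u_\hslash$, and then the bound $g(x,u_\hslash)u_\hslash\le\frac1k V(x)u_\hslash^2$ (multiplying by the negative factor $\frac1\theta-\frac12$) to arrive at $\frac1\theta g(x,u_\hslash)u_\hslash-G(x,u_\hslash)\ge\left(\frac1\theta-\frac12\right)\frac1k V(x)u_\hslash^2$.

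Integrating and using $\int_{\R^N\setminus\Lambda}V(x)u_\hslash^2\,dx\le Q_\hslash(u_\hslash)$ (again the factor is negative, so the inequality flips) one obtains
$$
\Phi_\hslash(u_\hslash)\ge\left(\frac12-\frac1\theta\right)Q_\hslash(u_\hslash)-\left(\frac12-\frac1\theta\right)\frac1k Q_\hslash(u_\hslash)=\left(\frac12-\frac1\theta\right)\left(1-\frac1k\right)Q_\hslash(u_\hslash).
$$
Since $\theta>2$ and $k>1$, both constants on the right are strictly positive, hence $Q_\hslash(u_\hslash)\le C_0\,\Phi_\hslash(u_\hslash)$ with $C_0$ depending only on $\theta$ and $k$. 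The preceding lemma gives $\Phi_\hslash(u_\hslash)\le\hslash^N(\underline c+o(1))$, so $Q_\hslash(u_\hslash)\le C\hslash^N$ for all sufficiently small $\hslash>0$ after enlarging the constant, which is the claim.

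The one genuinely delicate point is the region $\R^N\setminus\Lambda$: there (G3) supplies only an Ambrosetti--Rabinowitz inequality with exponent $2$ rather than $\theta$, so the integrand $\frac1\theta g(x,u_\hslash)u_\hslash-G(x,u_\hslash)$ may be negative and cannot be dropped. It is precisely the cut-off constant $k>1$ built into the definition of $\tilde f$ (equivalently, the bound $g(x,u)u\le\frac1k V(x)u^2$ off $\Lambda$) that makes this negative contribution strictly smaller in size than $(\frac12-\frac1\theta)Q_\hslash(u_\hslash)$ and hence absorbable, leaving a coercive lower bound on $\Phi_\hslash(u_\hslash)$ in terms of $Q_\hslash(u_\hslash)$.
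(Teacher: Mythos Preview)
Your argument is correct and follows the same Ambrosetti--Rabinowitz strategy as the paper: combine the critical point identity $\Phi_\hslash'(u_\hslash)[u_\hslash]=0$ with the energy bound $\Phi_\hslash(u_\hslash)\le\hslash^N(\underline c+o(1))$ from the preceding lemma, using (G3) and splitting $\Lambda$ versus $\R^N\setminus\Lambda$.

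The only difference is which linear combination is taken. The paper rewrites $\tfrac12 Q_\hslash(u_\hslash)=\Phi_\hslash(u_\hslash)+\int G$, bounds $\int_\Lambda G\le\tfrac1\theta\int_\Lambda g u\le\tfrac1\theta Q_\hslash(u_\hslash)$ and $\int_{\R^N\setminus\Lambda}G\le\tfrac{1}{2k}\int V u^2\le\tfrac{1}{2k}Q_\hslash(u_\hslash)$, arriving at the coefficient $\tfrac12-\tfrac1\theta-\tfrac{1}{2k}$, which is positive precisely because of the standing hypothesis $k>\tfrac{\theta}{\theta-2}$. You instead form $\Phi_\hslash-\tfrac1\theta\Phi_\hslash'[\,\cdot\,]$ and obtain the coefficient $(\tfrac12-\tfrac1\theta)(1-\tfrac1k)$, which is positive already for $k>1$. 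So your version is marginally sharper in that it does not actually need the full strength of $k>\tfrac{\theta}{\theta-2}$, but in the paper's setup that hypothesis is in force anyway, and both routes give the same conclusion.
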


\begin{proof}
Indeed, we have $\Phi_\hslash ' (u_\hslash)(u_\hslash) = 0$, i.e. $\int_{\R^N} \hslash^2 | \nabla u_\hslash|^2 + V(x) |u_\hslash|^2 \, dx = \int_{\R^N} g(x, u_\hslash) u_\hslash \, dx$. On the other hand
\begin{align*}
\frac{1}{2} \int_{\R^N} \hslash^2 | \nabla u_\hslash|^2 + V(x) |u_\hslash|^2 \, dx &=  \Phi_\hslash (u_\hslash) + \int_{\R^N} G(x, u_\hslash) \, dx \\ 
&\leq \hslash^N ( \underline{c} + o(1) ) + \frac{1}{\theta} \int_\Lambda g(x,u_\hslash) u_\hslash \, dx + \frac{1}{2k} \int_{\R^N} V(x) |u_\hslash|^2 \, dx \\
&\leq C_1 \hslash^N + \left( \frac{1}{\theta} + \frac{1}{2k} \right) \int_{\R^N} \hslash^2 | \nabla u_\hslash|^2 + V(x) |u_\hslash|^2 \, dx.
\end{align*}
Hence
$$
\left( \frac{1}{2} - \frac{1}{\theta} - \frac{1}{2k} \right)  \int_{\R^N} \hslash^2 | \nabla u_\hslash|^2 + V(x) |u_\hslash|^2 \, dx \leq C_1 \hslash^N.
$$
Moreover $ \frac{1}{2} - \frac{1}{\theta} - \frac{1}{2k} = \frac{1}{2} \left( \frac{\theta - 2}{\theta} - \frac{1}{k} \right) > 0$ and the proof is finished.
\end{proof}

\begin{Lem}\label{lem:1.5}
If $\hslash_n \to 0^+$  and $(x_n) \subset  \overline{\Lambda}$ are such that $u_{\hslash_n} (x_n) \geq b > 0$, then $\lim_{n\to\infty} V(x_n) = V_0$.
\end{Lem}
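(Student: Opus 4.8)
The plan is to rescale $u_{\hslash_n}$ about $x_n$ and pass to the semiclassical limit, following the local mountain‑pass scheme of del Pino and Felmer.

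Passing to a subsequence we may assume $x_n\to x_0$ for some $x_0\in\overline\Lambda$ (compactness of $\overline\Lambda$). By $(\Lambda_1)$ the minimum of $V$ over $\overline\Lambda$ equals $V_0$ and is attained at the interior point $x_{\min}$, so $V(x_0)\ge V_0$; since $V$ is continuous, $V(x_n)\to V(x_0)$, and it suffices to prove $V(x_0)\le V_0$. Put $v_n(x):=u_{\hslash_n}(x_n+\hslash_n x)$. The change of variables $y=x_n+\hslash_n x$ in Lemma~\ref{Lem:1.4} gives $\int_{\R^N}|\nabla v_n|^2+V(x_n+\hslash_n x)\,v_n^2\,dx\le C$, hence $(v_n)$ is bounded in $H^1(\R^N)$, and $v_n$ weakly solves $-\Delta v_n+V(x_n+\hslash_n x)v_n=g(x_n+\hslash_n x,v_n)$ on $\R^N$.

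The first point is nontriviality of the limit. By (G1)--(G2) the right‑hand side is $\le C(|v_n|+|v_n|^{p-1})$ with $p<2^*$, hence bounded in $L^s_{\loc}$ for some $s>1$; a Moser/Brezis--Kato iteration together with Calder\'on--Zygmund $L^q$‑estimates — which need only $V\in L^\infty$ and not the H\"older continuity used in \cite{delPinoCV} — give that $(v_n)$ is bounded in $W^{2,q}_{\loc}$ for every $q$, hence in $C^{1,\gamma}_{\loc}$. By Arzel\`a--Ascoli and a diagonal argument, along a further subsequence $v_n\to v$ in $C^1_{\loc}(\R^N)$ and weakly in $H^1$, with $v\ge0$ and $v(0)=\lim_n u_{\hslash_n}(x_n)\ge b>0$, so $v\not\equiv0$. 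Letting $n\to\infty$ in the equation and using the $C^1_{\loc}$‑convergence, the continuity of $V,\Gamma$ and the bound $0\le g(\cdot,u)\le\Gamma(\cdot)f(u)$, one finds that $v$ is a nonnegative nontrivial solution of a problem $-\Delta v+V(x_0)v=h(x,v)$ on $\R^N$ with $0\le h(x,u)\le\Gamma(x_0)f(u)$ (when $x_0\in\Lambda$, simply $-\Delta v+V(x_0)v=\Gamma(x_0)f(v)$). In particular $\int|\nabla v|^2+V(x_0)v^2\le\Gamma(x_0)\int f(v)v$, so projecting $v$ onto the Nehari manifold of $w\mapsto\tfrac12\int|\nabla w|^2+V(x_0)w^2\,dx-\Gamma(x_0)\int F(w)\,dx$ produces a point of energy $\ge c(V(x_0),\Gamma(x_0))$, the least‑energy level of $-\Delta w+V(x_0)w=\Gamma(x_0)f(w)$.

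The heart of the proof is an energy comparison. From $\hslash_n^{-N}\Phi_{\hslash_n}(u_{\hslash_n})=\tfrac12\int|\nabla v_n|^2+V(x_n+\hslash_n x)v_n^2\,dx-\int G(x_n+\hslash_n x,v_n)\,dx$, weak lower semicontinuity, Fatou's lemma, the $C^1_{\loc}$‑convergence, and a Brezis--Lieb/Lions argument excluding loss of mass at infinity (admissible since $u_{\hslash_n}$ is a mountain‑pass critical point), one gets $\liminf_n\hslash_n^{-N}\Phi_{\hslash_n}(u_{\hslash_n})\ge c(V(x_0),\Gamma(x_0))$. For a matching upper bound I would sharpen Lemma~\ref{Lem:1.4}: repeating the dominated‑convergence computation that yielded $\Phi_{\hslash}(u_{\hslash})\le\hslash^{N}(\underline c+o(1))$, but now with test function $w_1\big((x-x_{\min})/\hslash\big)$ where $w_1$ is a ground state of the autonomous problem $-\Delta w+V_0w=(\sup_{\R^N}\Gamma)\,f(w)$ and $(\Lambda_1)$ guarantees $x_{\min}\in\Lambda$ and $\Gamma(x_{\min})=\sup_{\R^N}\Gamma\ge\Gamma(x)$, one obtains $\hslash_n^{-N}\Phi_{\hslash_n}(u_{\hslash_n})\le c(V_0,\sup_{\R^N}\Gamma)+o(1)$, whence $c(V(x_0),\Gamma(x_0))\le c(V_0,\sup_{\R^N}\Gamma)$. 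Now under (F1)--(F4) the level $c(\lambda,\gamma)$ of $-\Delta w+\lambda w=\gamma f(w)$ is strictly increasing in $\lambda$ and nonincreasing in $\gamma$ (the standard Nehari‑manifold monotonicity, using (F4)); since $V(x_0)\ge V_0$ and $\Gamma(x_0)\le\sup_{\R^N}\Gamma$ we also have $c(V_0,\sup_{\R^N}\Gamma)\le c(V(x_0),\sup_{\R^N}\Gamma)\le c(V(x_0),\Gamma(x_0))$, so all these inequalities collapse to equalities and strict monotonicity in $\lambda$ forces $V(x_0)=V_0$ (in particular $x_0\in\Lambda$), giving $\lim_n V(x_n)=V_0$. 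I expect the energy comparison to be the main obstacle — the no‑loss‑of‑mass lower bound, and the sharp upper bound, which is exactly where condition $(\Lambda_1)$ (coincidence of a minimum of $V$ with a maximum of $\Gamma$) is genuinely used — with the nontriviality and $C^1_{\loc}$‑convergence of $(v_n)$ under merely $V\in L^\infty$ a secondary technical point.
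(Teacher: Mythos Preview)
Your strategy is the paper's: rescale about $x_n$, pass to a limit $v$, and compare energies. Two differences in execution are worth noting. (i)~You secure $v\not\equiv0$ via Moser/Calder\'on--Zygmund regularity and $C^1_{\loc}$ compactness, so that $v(0)\ge b$; the paper explicitly avoids any regularity argument (this is advertised in the introduction) and works only with the weak $H^1$-limit of $v_n$ together with a weak $L^t_{\loc}$-limit $\chi$ of $\chi_\Lambda(x_n+\hslash_n\cdot)$, obtaining that $v$ solves $-\Delta v+V(\overline x)v=\overline g(x,v)$ with $\overline g=\chi\,\Gamma(\overline x)f+(1-\chi)\Gamma(\overline x)\tilde f$. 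Your regularity step is thus not what the authors intend, though it does supply the nontriviality of $v$ that the final inequality $\overline\cJ(v)\ge\overline c$ needs and that the paper's written proof does not spell out. (ii)~The lower bound you flag as ``the main obstacle'' needs no Brezis--Lieb/Lions exclusion of mass at infinity: the paper writes $\cJ_n(v_n)=\cJ_n(v_n)-\tfrac12\cJ_n'(v_n)v_n=\tfrac12\int_{\R^N} h_n$ with $h_n:=g(x_n+\hslash_n\cdot,v_n)v_n-2G(x_n+\hslash_n\cdot,v_n)\ge0$ by (G3), so restricting to $B_R$, passing to the limit locally, and sending $R\to\infty$ already gives $\liminf_n\cJ_n(v_n)\ge\overline\cJ(v)$. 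For the upper bound the paper simply quotes the lemma $\Phi_{\hslash}(u_{\hslash})\le\hslash^N(\underline c+o(1))$ already proved; there is no need to redo it with a fully autonomous level $c(V_0,\sup\Gamma)$. Your monotonicity-of-$c(\lambda,\gamma)$ endgame and the paper's $\overline c>\underline c$ are equivalent.
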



\begin{proof}
Assume by contradiction, passing to a subsequence, that $x_n \to \overline{x} \in \overline{\Lambda}$ and $V(\overline{x}) > V_0$. Put $v_n (x) := u_{\hslash_n} (x_n + \hslash_n x)$. Obviously, $v_n \in H$ satisfies the equation
$$
-\Delta v_n  + V(x_n + \hslash_n x) v_n = g(x_n + \hslash_n x, v_n)  \quad \mbox{in} \ \R^N.
$$
From Lemma \ref{Lem:1.4} it follows easily that $(v_n)$ is bounded in $H$ and therefore $v_n \weakto v$ in $H$ for some $v \in H$. Take any $\varphi \in \cC_0^\infty (\R^N)$ and see that
$$
\int_{\R^N} \nabla v_n \cdot \nabla \varphi \, dx \to \int_{\R^N} \nabla v \cdot \nabla \varphi \, dx.
$$
Moreover
$$
\int_{\R^N} V(x_n + \hslash_n x) v_n \varphi \, dx \to \int_{\R^N} V(\overline{x} ) v \varphi \, dx.
$$
Functions $\chi_n (x) := \chi_\Lambda (x_n + \hslash_n x)$ are bounded in $L^t_{\loc} (\R^N)$ for any $1 < t < \infty$, and therefore $\chi_n \weakto \chi$ in $L^t_{\loc} (\R^N)$, where $0 \leq \chi \leq 1$. Hence $v \in H$ is a weak solution to
$$
-\Delta v + V(\overline{x}) v = \overline{g}(x,v) \quad \mathrm{in} \ \R^N,
$$
where
$$
\overline{g}(x,u) = \chi(x) \Ga (\overline{x}) f(u) + (1-\chi(x)) \Ga (\overline{x}) \tilde{f}(u).
$$
The associated energy functional is given by
$$
\overline{\cJ} (u) := \frac{1}{2} \int_{\R^N} |\nabla u|^2 + V(\overline{x}) |u|^2 \, dx - \int_{\R^N} \overline{G}(x,u) \, dx \quad \mathrm{for} \ u \in H^1 (\R^N)
$$
and $\overline{G}(x,u) := \int_0^u \overline{g}(x,s) \, ds$. Since $v$ is a weak solution, we have $\overline{\cJ}' (v) = 0$. Set
$$
\cJ_n (u) := \frac{1}{2} \int_{\R^N} |\nabla u|^2 + V(x_n + \hslash_n x) |u|^2 \, dx - \int_{\R^N} G(x_n + \hslash_n x, u) \, dx \quad \mathrm{for} \ u \in H^1 (\R^N)
$$
and obviously $\cJ_n ' (v_n) = 0$.
See that
$$
\cJ_n (v_n) = \cJ_n (v_n) - \frac12 \cJ_n ' (v_n)(v_n) = \frac12 \int_{\R^N} g(x_n + \hslash_n x , v_n) v_n - 2 G(x_n + \hslash_n x, v_n ) \, dx.
$$
Define
$$
h_n := g(x_n + \hslash_n \cdot , v_n) v_n - 2 G(x_n + \hslash_n \cdot, v_n ).
$$
In view of (G3) we have $h_n \geq 0$. Fix $R > 0$. In view of compact embedding we have $v_n \to v$ in $L^t (B(0,R))$ for $t \in [2, 2^*)$. Moreover $\chi_n \weakto \chi$ in any $L^t (B(0,R))$.
Then
$$
\int_{B(0,R)} h_n \, dx \to \int_{B(0,R)} \overline{g} (x,v)v - 2 \overline{G}(x,v) \, dx
$$
Hence, for every $\delta > 0$ there is $R>0$ large enough such that
\begin{align*}
\frac{1}{2} \int_{\R^N} h_n \, dx &\geq  \frac{1}{2}  \int_{B(0,R)} h_n \, dx \to \frac{1}{2} \int_{B(0,R)} \overline{g} (x,v)v - 2 \overline{G}(x,v) \, dx \\ 
&\geq \frac{1}{2} \int_{\R^N} \overline{g} (x,v)v - 2 \overline{G}(x,v) \, dx - \delta = \overline{\cJ} (v) - \frac12 \overline{\cJ}'(v)(v) - \delta =  \overline{\cJ} (v) - \delta.
\end{align*}
Therefore $\liminf_{n\to\infty} \cJ_n (v_n) \geq \overline{\cJ} (v)$. On the other hand
$
\cJ_n (v_n) = \hslash_n^{-N} \Phi_{\hslash_n} (u_{\hslash_n}) \leq \underline{c} + o(1).
$
Hence $\overline{\cJ} (v) \leq \underline{c}$. Taking into account that $f(u) \geq \tilde{f}(u)$ we have
$$
\overline{\cJ} (v) = \max_{\tau \geq 0} \overline{\cJ} (\tau v) \geq \inf_{u \in H^1 (\R^N), \ u \neq 0} \sup_{\tau > 0} I(\tau u) =: \overline{c},
$$
where
$$
I(u) := \frac12 \int_{\R^N} |\nabla u|^2 + V(\overline{z}) u^2 \, dx - \int_{\R^N} \Gamma(\overline{x}) F(u) \, dx.
$$
On the other hand, taking into account that $V(\overline{x}) > V_0$, there is $\overline{c} > \underline{c}$ -- a contradiction.
\end{proof}

\begin{Lem}\label{Lem:1.6}
There holds $\lim_{\hslash \to 0^+} m_\hslash = 0$, where $m_\hslash$ is given by \eqref{eq:mh}.
\end{Lem}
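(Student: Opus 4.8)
The plan is to argue by contradiction. Suppose the conclusion fails; then there exist $b>0$ and a sequence $\hslash_n \to 0^+$ with $m_{\hslash_n} \ge b$ for all $n$. Since $\Lambda$ is a bounded domain, $\partial\Lambda$ is compact, and since each $u_{\hslash_n}$ is continuous (as recalled after the first Lemma, via \cite[Theorem 4.2]{Mederski}), the supremum defining $m_{\hslash_n}$ in \eqref{eq:mh} is attained: there is $x_n \in \partial\Lambda$ with $u_{\hslash_n}(x_n) = m_{\hslash_n} \ge b$. Passing to a subsequence we may assume $x_n \to \overline{x}$ for some $\overline{x}\in\partial\Lambda$.

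Now I would apply Lemma \ref{lem:1.5} to the sequence $(x_n)$. This is legitimate because $\partial\Lambda \subset \overline{\Lambda}$, so $(x_n)\subset\overline{\Lambda}$, and we have just arranged $u_{\hslash_n}(x_n)\ge b>0$ with $\hslash_n\to 0^+$. Lemma \ref{lem:1.5} then gives $V(x_n)\to V_0$. By continuity of $V$ (condition (V)), it follows that $V(\overline{x}) = \lim_{n\to\infty} V(x_n) = V_0 = \inf_\Lambda V$.

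On the other hand, $\overline{x}\in\partial\Lambda$ forces $V(\overline{x}) \ge \min_{\partial\Lambda} V$, while condition $(\Lambda_1)$ asserts $\min_{\partial\Lambda} V > \inf_\Lambda V = V_0$. Combining these, $V_0 = V(\overline{x}) \ge \min_{\partial\Lambda} V > V_0$, a contradiction. Hence no such $b$ exists and $\lim_{\hslash\to 0^+} m_\hslash = 0$.

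I do not anticipate any real obstacle here: the argument only assembles already-established ingredients, namely compactness of $\partial\Lambda$, continuity of the solution $u_\hslash$ to pick out the maximizing points, Lemma \ref{lem:1.5} to locate $V$ along those points, and the strict gap $\inf_\Lambda V < \min_{\partial\Lambda}V$ built into $(\Lambda_1)$ to force the contradiction. The only point deserving a word of care is checking the hypothesis of Lemma \ref{lem:1.5} applies with the maximizing sequence lying in $\overline{\Lambda}$ — which it does, since $\partial\Lambda\subset\overline{\Lambda}$ — and no further regularity or decay information about $u_\hslash$ is required.
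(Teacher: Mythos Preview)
Your proof is correct and follows essentially the same approach as the paper's own proof: argue by contradiction, pick maximizing points $x_n\in\partial\Lambda$, extract a convergent subsequence, apply Lemma \ref{lem:1.5} to conclude $V$ at the limit equals $V_0$, and contradict the strict inequality in $(\Lambda_1)$. Your write-up is in fact slightly more detailed than the paper's (you spell out why the maximum is attained and invoke continuity of $V$ explicitly), but the argument is the same.
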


\begin{proof}
Assume by contradiction that $m_\hslash \not\to 0$. Let $x_\hslash \in \partial \Lambda \subset \overline{\Lambda}$ be such that $u_\hslash(x_\hslash) = m_\hslash$. Then, up to a subsequence we have $u_{\hslash_n} (x_{\hslash_n}) \geq b > 0$ and $x_{\hslash_n} \to x_0 \in \partial \Lambda$. Hence, in view of Lemma \ref{lem:1.5} we gets
$$
\min_{\partial \Lambda} V \leq V(x_0) = V_0 = \inf_\Lambda V < \min_{\partial \Lambda} V,
$$
which is a contradiction.
\end{proof}

\section{Case ($\Lambda_2$)}

Put $\Ga_0 := \max_\Lambda \Ga$. Let $w \in H$ be a least energy solution to $-\Delta w + V(x) w = \Ga_0 f(w)$, in particular
$$
\underline{c} := I_0 (w) = \inf_{v \in H \setminus \{0\}} \sup_{t \geq 0} I_0 (tv),
$$
where
$$
I_0 (v) = \frac{1}{2} \int_{\R^N} | \nabla v|^2 + V(x) v^2 \, dx - \int_{\R^N} \Ga_0 F(v).
$$
Under $(\Lambda_2)$ we have that $V$ is $\mathbb{Z}^N$-periodic, hence the solution exists (see e.g. \cite{Rabinowitz,Mederski}).
\begin{Lem}
There holds $\Phi_\hslash (u_\hslash) \leq \hslash^N (\underline{c} + o(1))$ as $\hslash \to 0^+$.
\end{Lem}

\begin{proof} 
Let $x_{\max} \in \Lambda$ be such that $\Ga(x_{\max}) = \Ga_0$. Let $u(x) := w \left( \frac{x-x_{\max}}{\hslash} \right)$. Then $\Phi_\hslash (u_\hslash) \leq \sup_{t > 0} \Phi_\hslash (tu) = \Phi_\hslash (t_0 u)$ for some $t_0 > 0$. See that
\begin{align*}
\Phi_\hslash (t_0 u) &= \frac{t_0^2 \hslash^2}{2} \int_{\R^N} |\nabla u|^2 \, dx + \frac{t_0^2}{2} \int_{\R^N} V(x) u^2 \, dx - \int_{\R^N} G(x, t_0 u) \, dx \\
&= \hslash^N \left( I_0 (t_0 w) + \frac{t_0^2}{2} \int_{\R^N} [ V(x_{\max} + \hslash x) - V(x) ] w^2 \, dx + \int_{\R^N} \Ga_0 F(t_0 w) - G(x_{\max} + \hslash x, t_0 w) \, dx \right).
\end{align*}
From the Lebesgue's dominated convergence theorem we have
$$
\int_{\R^N} [ V(x_{\max} + \hslash x) - V(x) ] w^2 \, dx \to \int_{\R^N} [ V(x_{\max} ) - V(x) ] w^2 \, dx \leq 0.
$$
Note that $G(x, t_0 w) \leq  F(t_0 w)$. Again, from the Lebesgue's dominated convergence theorem the following convergence hold
$$
\int_{\R^N} \Ga_0 F(t_0 w) - G(x_{\max} + \hslash x, t_0 w) \, dx \to \int_{\R^N} \Ga_0 F(t_0 w) - \Ga(x_{\max})F( t_0 w) \, dx = 0.
$$
Finally $\Phi_\hslash (t_0 u) \leq \hslash^N \left( I_0 (t_0 w) + o(1) \right) \leq \hslash^N \left( \underline{c} + o(1) \right)$.
\end{proof}
Now we can repeat the proof of Lemma \ref{Lem:1.4}, \ref{lem:1.5} and \ref{Lem:1.6}.

\section{Conclusion}


\begin{proof}[Proof of Theorem \ref{th:main1}]
Let $u_\hslash$ be a positive critical point for $\Phi_\hslash$. In view of Lemma \ref{Lem:1.6} there is $\hslash_0$ such that for any $\hslash \in (0, \hslash_0)$ there holds $m_\hslash < a$. Therefore $u_\hslash (x) < a$ for $x \in \partial \Lambda$. Hence, in view of the maximum principle, we have
$$
u_\hslash(x) \leq a \quad \mathrm{for} \ x \in \Lambda.
$$
Take $(u_\hslash - a)_+ := \max\{ u_\hslash - a, 0 \}$ as a test function for $\Phi_\hslash$, i.e. we have $\Phi_\hslash ' (u_\hslash) \left( (u_\hslash - a)_+ \right) = 0$. Thus we get
\begin{equation}\label{eq:c}
\int_{\R^N \setminus \Lambda} \hslash^2 | \nabla (u_\hslash - a)_+ |^2 + c(x) (u_\hslash - a)_+^2 + c(x) a (u_\hslash - a)_+ \, dx = 0,
\end{equation}
where $c(x) = V(x) - \frac{g(x,u_\hslash (x))}{u_\hslash (x)}$. Moreover, for $x \in \R^N \setminus \Lambda$, 
taking into account that $|\Ga|_\infty = 1$, we obtain $\frac{g(x,u_\hslash (x))}{u_\hslash (x)} \leq \frac{\alpha}{k}$. Therefore $c(x) > 0$ for $x \in \R^N \setminus \Lambda$.
Hence all summands in \eqref{eq:c} are zero. In particular $(u_\hslash - a)_+ = 0$ and $u_\hslash(x) \leq a$ for $x \in \R^N \setminus \Lambda$. Hence $g(x,u_\hslash(x)) = \Ga (x) f(u_\hslash(x))$ and $u_\hslash$ is a solution of \eqref{eq:1.1}.
\end{proof}

\textbf{Acknowledgements.} Bartosz Bieganowski was partially supported by the National Science Centre, Poland (Grant No. 2017/25/N/ST1/00531). Jarosław Mederski  was partially supported by the National Science Centre,
Poland (Grant No. 2014/15/D/ST1/03638) and by the Deutsche Forschungs\-gemeinschaft (DFG) through CRC 1173.

\section*{References}

\end{document}